\documentclass[11pt,oneside,leqno]{amsart}
\usepackage{amsxtra}
\usepackage[all]{xy}
\usepackage{amsopn}
\usepackage{color}
\usepackage{amsmath,amsthm,amssymb}
\usepackage{amscd}
\usepackage{amsfonts}
\usepackage{latexsym}
\usepackage{verbatim}
\usepackage{pb-diagram}
\usepackage{tikz}

\newtheorem{theorem}{Theorem}[section]
\newtheorem{corollary}{Corollary}[section]
\newtheorem*{theorem*}{Theorem}
\newtheorem*{corollary*}{Corollary}
\newtheorem{definition}[theorem]{Definition}
\newtheorem{rem}[theorem]{Remark}

\sloppy

\newcommand{\del}{\partial}
\newcommand{\delbar}{\overline{\del}}

%=====================================================
\setlength{\textwidth}{15cm} \setlength{\evensidemargin}{1cm}
\setlength{\oddsidemargin}{1cm}
%=====================================================
\begin{document}
\title[Relative \v{C}ech-Dolbeault homology and applications]
{Relative \v{C}ech-Dolbeault homology and applications}
\author{Nicoletta Tardini}
\date{\today}
\address{Dipartimento di Matematica e Informatica ``Ulisse Dini''\\
Universit\`a degli Studi di Firenze\\
viale Morgagni 67/a\\
50134 Firenze, Italy}
\email{nicoletta.tardini@gmail.com}
\thanks{Partially supported by SIR2014 project RBSI14DYEB ``Analytic aspects in complex and hypercomplex geometry'' and by GNSAGA
of INdAM}
\keywords{Dolbeault cohomology, relative cohomology, currents.}
\subjclass[2010]{32Q99, 32H99}
\begin{abstract}
We define the relative Dolbeault homology of a complex manifold
with currents via a \v{C}ech approach
and we prove its equivalence with the relative \v{C}ech-Dolbeault cohomology
as defined in \cite{suwa}. This definition is then used to compare the relative
Dolbeault cohomology groups of two complex manifolds of the same dimension
related by a suitable proper surjective holomorphic map. Finally, an application
to blow-ups is considered and a blow-up formula for the Dolbeault cohomology in terms of relative cohomology is presented. 
\end{abstract}

\maketitle

\section*{Introduction}

On a complex manifold an important global invariant is represented by the
Dolbeault cohomology which can be described equivalently using complex
differential forms or currents. In particular, this double interpretation was
used fruitfully by R. O. Wells in \cite{wells} in order to compare the
Dolbeault cohomology of two complex manifolds
of the same dimensions related by a proper holomorphic surjective map.
More precisely, he proved that if $\pi:\tilde X\to X$ is a proper holomorphic surjective map between two complex manifolds of the same dimension then the induced map in cohomology
$$
\pi^*:H^{\bullet,\bullet}_{\delbar}(X)\to H^{\bullet,\bullet}_{\delbar}(\tilde X)
$$
is injective. In particular, if $X$ and $\tilde X$ are compact
we have the dimensional inequalities $h_{\delbar}^{\bullet,\bullet}(X)
\leq h_{\delbar}^{\bullet,\bullet}(\tilde X)$ for their Hodge numbers.
In fact, this result can be weaken to almost-complex manifolds as done in
\cite{tardini-tomassini-almost-complex} for pseudo-holomorphic maps, where the Dolbeault cohomology is replaced by the cohomology groups introduced by Li and Zhang \cite{li-zhang}.\\
The Dolbeault cohomology can also be interpreted via a \v{C}ech approach, as done in \cite{suwa}, and it turns out that this approach is also useful
in order to define its relative version. The relative \v{C}ech-Dolbeault cohomology,
which is equivalent to the relative Dolbeault cohomology as defined for instance in \cite{ida} (cf. \cite{suwa-sheaf}),
can be used to describe the localization theory of characteristic classes
(\cite{suwa}, \cite{abate-bracci-suwa-tovena}) and recently has found applications to the Sato hyperfunction
theory \cite{honda-izawa-suwa}.\\
In the present paper we define the relative \v{C}ech-Dolbeault homology of a complex manifold in terms of currents, and we prove in
Theorem \ref{thm:isom-relative-cohom-relative-hom} that
this description is equivalent to Suwa's one using complex differential forms. 
This different interpretation is used in Theorem \ref{thm:injectivity}
in order to prove a Wells-type result for relative cohomology, in particular we prove the following
\begin{theorem*}
Let $\pi:\tilde X\longrightarrow X$ be a proper, surjective, holomorphic map between two complex manifolds of the same dimension. Suppose that $X$ is connected and let $S$ and $\tilde S$ be closed complex submanifolds of $X$ and $\tilde X$ respectively, such that $\pi(\tilde S)\subset S$ and
$\pi(\tilde X\setminus\tilde S)\subset X\setminus S$.\\
Then,
$$
\pi^*:H^{p,q}_{\bar D}(X,X\setminus S)\longrightarrow 
H^{p,q}_{\bar D}(\tilde X,\tilde X\setminus \tilde S)
$$
is injective for any $p,q$.\\
\end{theorem*}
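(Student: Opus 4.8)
The plan is to adapt to the relative setting the argument used by Wells in the absolute case, trading the cohomology computed with forms for the one computed with currents via Theorem~\ref{thm:isom-relative-cohom-relative-hom}. The algebraic core will be the projection formula $\pi_*\pi^*=(\deg\pi)\,\mathrm{id}$, which becomes available once currents can be pushed forward compatibly with the relative structure. First I would record the consequence of the two hypotheses on $S$ and $\tilde S$: from $\pi(\tilde S)\subset S$ we get $\tilde S\subset\pi^{-1}(S)$, while $\pi(\tilde X\setminus\tilde S)\subset X\setminus S$ gives $\pi^{-1}(S)\subset\tilde S$, so that $\pi^{-1}(S)=\tilde S$ and $\pi^{-1}(X\setminus S)=\tilde X\setminus\tilde S$. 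This is exactly what makes coverings pull back well: taking $\mathcal V=\{V_0,V_1\}$ with $V_0=X\setminus S$ and $V_1\supset S$ open, and setting $\tilde V_i=\pi^{-1}(V_i)$, one obtains a covering $\tilde{\mathcal V}=\{\tilde V_0,\tilde V_1\}$ of $\tilde X$ with $\tilde V_0=\tilde X\setminus\tilde S$, $\tilde V_1\supset\tilde S$ and $\tilde V_{01}=\pi^{-1}(V_{01})$, and the restrictions $\pi\colon\tilde V_1\to V_1$, $\pi\colon\tilde V_{01}\to V_{01}$ are again proper, surjective and holomorphic between equidimensional manifolds.

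Next I would construct the direct image on relative \v{C}ech-Dolbeault homology. Since $\pi$ is proper, the push-forward $\pi_*$ of currents is defined, and I would let it act componentwise on a relative cochain of currents, carrying data on $(\tilde V_1,\tilde V_{01})$ to data on $(V_1,V_{01})$. Because $\tilde V_i=\pi^{-1}(V_i)$, proper push-forward commutes with $\delbar$ and with the restriction maps entering the total differential $\bar D$, so $\pi_*$ is a chain map and induces a map on relative homology; on the form side $\pi^*$ is the usual componentwise pullback. Using that $X$ is connected, so that the generic fiber cardinality $d=\deg\pi$ is a well-defined positive integer, I would then establish the projection formula $\pi_*\pi^*\xi=d\,\xi$ at the level of cochains, the branch locus being a proper analytic subset and hence negligible for these current identities. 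Because $\pi_*$ and $\pi^*$ respect the relative differentials, this equality descends to the relative groups.

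The conclusion is then immediate. Let $\varphi\in H^{p,q}_{\bar D}(X,X\setminus S)$ satisfy $\pi^*\varphi=0$ and represent it by a form-cochain; then $\pi^*\varphi=\bar D\eta$ for some cochain $\eta$, whence $d\,\varphi=\pi_*\pi^*\varphi=\bar D(\pi_*\eta)$ is exact in the complex of currents, i.e.\ $d[\varphi]=0$ in relative \v{C}ech-Dolbeault homology. By Theorem~\ref{thm:isom-relative-cohom-relative-hom} the inclusion of forms into currents is an isomorphism on the relative groups, so $d[\varphi]=0$ already in $H^{p,q}_{\bar D}(X,X\setminus S)$; since this is a complex vector space and $d>0$, we conclude $\varphi=0$, which is the asserted injectivity.

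I expect the genuine difficulty to lie in the second step rather than the third: verifying that $\pi_*$ is well defined and is a chain map on the relative \v{C}ech-Dolbeault homology, in particular that proper direct image commutes with the restriction to $V_{01}$ built into $\bar D$ — a base-change point that relies precisely on $\tilde V_{01}=\pi^{-1}(V_{01})$ — and proving the projection formula $\pi_*\pi^*=d\,\mathrm{id}$ across the branch locus. Once these are settled, the duality of Theorem~\ref{thm:isom-relative-cohom-relative-hom} does the rest.
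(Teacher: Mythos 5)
Your proposal is correct and follows essentially the same route as the paper: pass to the current-theoretic relative groups via Theorem~\ref{thm:isom-relative-cohom-relative-hom}, push forward currents componentwise along the pulled-back covering, and use the Wells projection formula $\pi_*\,\tilde i\,\pi^*=\mu\, i$ (constant degree $\mu$ by connectedness of $X$) to force $i_*a=0$ and hence $a=0$. The only difference is cosmetic: the paper simply cites \cite[Lemma 2.1]{wells} for the projection formula rather than reproving it via the branch-locus argument, and it phrases the final step at the level of cohomology classes instead of cochains.
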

Differently from the classical Dolbeault theory, a dimensional inequality
in this setting cannot be expected even if $X$ and $\tilde X$ are compact,
indeed in general the relative Dolbeault cohomology groups of
a compact complex manifold are infinite-dimensional.
A similar result can be proved for the relative de Rham cohomology (cf. Theorem
\ref{thm:injectivity-derham}).

Notice that the hypothesis in the previous Theorem are satisfied by modifications.
In particular, if  $\tau:\tilde X
\to X$ is the blow-up of a complex manifold $X$ along a closed submanifold $Z$ then the previous assumptions are satisfied with $S=Z$ and
$\tilde S=E:=\pi^{-1}(Z)$ the exceptional divisor. Then, as a consequence the Dolbeault cohomology of $\tilde X$
can be expressed in terms of the Dolbeault cohomology of $X$
and the relative cohomology, more precisely (cf. Corollary \ref{cor:blow-up-formula})
\begin{corollary*}
Let $\tau:\tilde X
\to X$ be the blow-up of a complex manifold $X$ along a closed submanifold $Z$. Then,
there are isomorphisms 
$$
H^{p,q}_{\delbar}(\tilde X)\simeq
\tau^*H^{p,q}_{\delbar}(X)\oplus
\frac{H^{p,q}_{\bar D}(\tilde X,\tilde X\setminus E)}
{\tau^*H^{p,q}_{\bar D}(X,X\setminus Z)}\,,
$$
for any $p,\,q$, where $E$ is the exceptional divisor of the blow-up.\\
\end{corollary*}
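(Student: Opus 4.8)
The plan is to compare, for each bidegree $(p,q)$, the long exact sequences of the pairs $(X,X\setminus Z)$ and $(\tilde X,\tilde X\setminus E)$ in relative cohomology, and to exploit that a blow-up is a biholomorphism away from its center. First I would invoke the long exact sequence of the pair,
$$
\cdots \longrightarrow H^{p,q}_{\bar D}(X,X\setminus Z)\stackrel{\alpha}{\longrightarrow} H^{p,q}_{\delbar}(X)\stackrel{\beta}{\longrightarrow} H^{p,q}_{\delbar}(X\setminus Z)\stackrel{\delta}{\longrightarrow} H^{p,q+1}_{\bar D}(X,X\setminus Z)\longrightarrow\cdots
$$
and the analogous one for $(\tilde X,\tilde X\setminus E)$, with maps $\tilde\alpha,\tilde\beta,\tilde\delta$. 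Since $\tau$ sends $\tilde X\setminus E$ into $X\setminus Z$, it is a morphism of pairs and pullback gives a morphism between the two sequences; I denote the three vertical maps by $a=\tau^*$ on the relative groups, $b=\tau^*$ on $H^{p,q}_{\delbar}$, and $c=\tau^*$ on the cohomology of the complements. The key geometric input is that $\tau$ restricts to a biholomorphism $\tilde X\setminus E\to X\setminus Z$, so $c$ is an isomorphism for every $p,q$; moreover $a$ is injective by Theorem \ref{thm:injectivity} (its hypotheses hold with $S=Z$, $\tilde S=E$), and $b$ is injective by Wells' theorem \cite{wells} (it also follows from the chase below).

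The heart of the argument is a five-lemma-type diagram chase showing that $\tilde\alpha$ induces an isomorphism
$$
\bar\psi\colon \frac{H^{p,q}_{\bar D}(\tilde X,\tilde X\setminus E)}{a\,H^{p,q}_{\bar D}(X,X\setminus Z)}\stackrel{\sim}{\longrightarrow}\frac{H^{p,q}_{\delbar}(\tilde X)}{b\,H^{p,q}_{\delbar}(X)}.
$$
This map is well defined because $\tilde\alpha a=b\alpha$, so $\tilde\alpha$ carries $a\,H^{p,q}_{\bar D}(X,X\setminus Z)$ into $b\,H^{p,q}_{\delbar}(X)$. For injectivity, if $\tilde\alpha(\tilde x)=b(y)$ then applying $\tilde\beta$ and using $\tilde\beta b=c\beta$ with $c$ injective gives $\beta(y)=0$, whence $y=\alpha(z)$; then $\tilde\alpha(\tilde x-a(z))=0$, so $\tilde x-a(z)\in\operatorname{im}\tilde\delta$, and since $c$ is surjective and $\tilde\delta c=a\delta$ this element equals $a(\delta w)$, so $\tilde x\in a\,H^{p,q}_{\bar D}(X,X\setminus Z)$. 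For surjectivity, given $\tilde y$ write $\tilde\beta(\tilde y)=c(u)$; then $a\delta(u)=\tilde\delta c(u)=\tilde\delta\tilde\beta(\tilde y)=0$, and injectivity of $a$ forces $\delta u=0$, so $u=\beta(v)$; consequently $\tilde y-b(v)\in\ker\tilde\beta=\operatorname{im}\tilde\alpha$, producing a preimage of $[\tilde y]$.

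Finally, combining $\bar\psi$ with the injectivity of $b$ yields for each $(p,q)$ the short exact sequence
$$
0\longrightarrow H^{p,q}_{\delbar}(X)\stackrel{\tau^*}{\longrightarrow}H^{p,q}_{\delbar}(\tilde X)\longrightarrow \frac{H^{p,q}_{\bar D}(\tilde X,\tilde X\setminus E)}{\tau^*H^{p,q}_{\bar D}(X,X\setminus Z)}\longrightarrow 0,
$$
and since all the groups involved are complex vector spaces the sequence splits, giving precisely the asserted direct sum decomposition.

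I expect the main obstacle to be the purely formal but delicate verification that $\tau^*$ is genuinely a morphism of the two long exact sequences, namely the naturality of the relative \v{C}ech--Dolbeault sequence and in particular the commutativity of the squares involving the connecting homomorphisms $\delta$ and $\tilde\delta$, together with the precise identification of $c$ as an isomorphism coming from the biholomorphism $\tau\colon\tilde X\setminus E\to X\setminus Z$. Once these naturality statements are secured the remainder is the formal chase above and the splitting of a short exact sequence of vector spaces.
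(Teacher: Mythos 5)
Your proof is correct and takes essentially the same route as the paper: the identical morphism of the two long exact sequences, with the identical three inputs (the map on complements is an isomorphism since $\tau$ is a biholomorphism off the center, the relative map is injective by Theorem \ref{thm:injectivity}, and the absolute map is injective by Wells' theorem). The only difference is that the paper delegates the formal five-lemma-type step to Blanchard's Lemme II.6 (\cite{blanchard}), whereas you carry out that diagram chase explicitly.
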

This blow-up formula for the Dolbeault cohomology will find further applications in \cite{angella-suwa-tardini-tomassini}. For other formulas of this kind  we refer the reader to \cite{yang-yang}, \cite{rao-yang-yang-1}, \cite{rao-yang-yang-1}, \cite{stelzig-blowup}, \cite{meng}.

\medskip

\noindent {\em Acknowledgements.} The author would like to thank Daniele Angella, Tatsuo Suwa and Adriano Tomassini for many useful discussions and comments. The author would like to thank also Sheng Rao, Song Yang, Xiangdong Yang for their interest in the paper.
\smallskip

\section{Preliminaries and notations}

We start by fixing some notations and recalling some results about relative \v{C}ech-Dolbeault cohomology as presented in \cite{suwa}.\\
\textbf{\v{C}ech-Dolbeault cohomology\,.} 
Let $X$ be a complex manifold of complex dimension $n$. 
We denote with $A^{p,q}$ the space of smooth $(p,q)$-forms on $X$. 
Let $\mathcal{U}=\{U_{0},U_{1}\}$ be an open covering of $X$ and consider
\[
A^{p,q}(\mathcal{U}):=A^{p,q}(U_{0})\oplus A^{p,q}(U_{1})
\oplus A^{p,q-1}(U_{01})
\]
where by definition $U_{01}:=U_0\cap U_1$, with the differential operator $\bar D:A^{p,q}(\mathcal{U})\to A^{p,q+1}(\mathcal{U})$ defined on every element 
$(\xi_{0},\,\xi_{1},\,\xi_{01})\in A^{p,q}(\mathcal{U})$ by 
$$
\bar D\left(\xi_{0},\,\xi_{1},\,\xi_{01}\right)=\left(\delbar\xi_{0},\,\delbar\xi_{1},\,
\xi_{1}-\xi_{0}-\delbar\xi_{01}\right)\,.
$$
The \v{C}ech-Dolbeault cohomology associated to the covering $\mathcal{U}$
is defined by $H^{\bullet,\bullet}_{\bar D}(\mathcal{U})
=\text{Ker}\,\bar D/\text{Im}\,\bar D$
(cf. \cite{suwa} where this definition is given for an arbitrary open covering of $X$).
In \cite{suwa} it is proven that the 
morphism $A^{p,q}(X)\to A^{p,q}(\mathcal{U})$ given by $\xi\mapsto (\xi|_{U_{0}},\xi|_{U_{1}},0)$ induces an isomorphism
\[
H^{\bullet,\bullet}_{\delbar}(X)\overset\sim\to H^{\bullet,\bullet}_{\bar D}(\mathcal{U}),
\]
where $H^{p,q}_{\delbar}(X)$ denotes the Dolbeault cohomology of $X$.
In particular, the definition does not depend on the choice of the covering of $X$.
The inverse map is given by assigning to the class of $\xi=(\xi_{0},\xi_{1},\xi_{01})$ the class
of global $\delbar$-closed form $\rho_{0}\xi_{0}+\rho_{1}\xi_{1}-\delbar\rho_{0}\wedge\xi_{01}$, where $(\rho_{0},\rho_{1})$
is a partition of unity subordinate to the covering $\mathcal{U}$.\\
One can define naturally the cup product, integration on top-degree cohomology and the Kodaira-Serre duality and they turn out to be compatible with the above isomorphism (see \cite{suwa} for more details).

\textbf{Relative Dolbeault cohomology\,.} Let $S$ be a closed set in $X$. We let $U_{0}=X\setminus S$ and $U_{1}$ be an open neighborhood of $S$ in $X$ and consider the covering $\mathcal{U}=\{U_{0},U_{1}\}$ of $X$. We set,
for any $p,q$,
\[
A^{p,q}(\mathcal{U},U_{0}):=\{\,\xi\in A^{p,q}(\mathcal{U})\mid\xi_{0}=0\,\}=A^{p,q}(U_{1})\oplus A^{p,q-1}(U_{01}).
\]
Then $\left(A^{p,\bullet}(\mathcal{U},U_{0}), \,\bar D\right)$ is a subcomplex of 
$\left(A^{p,\bullet}(\mathcal{U}),\,\bar D\right)$. 
Let $H^{p,q}_{\bar D}(\mathcal{U},U_{0})$ be the
cohomology of $\left(A^{p,\bullet}(\mathcal{U},U_{0}),\,\bar D\right)$. From the exact sequence 
\[
0\to A^{p,\bullet}(\mathcal{U},U_{0})\to A^{p,\bullet}(\mathcal{U})
\to A^{p,\bullet}(U_{0})\to 0
\]
where the first map is the inclusion and the second map is the projection on the first element, we obtain the long exact sequence in cohomology
$$
\cdots\to H^{p,q-1}_{\delbar}(U_{0})\overset\delta\to H^{p,q}_{\bar D}(\mathcal{U},U_{0})\overset{j^{*}}\to H^{p,q}_{\bar D}(\mathcal{U})\overset{i^{*}}\to H^{p,q}_{\delbar}(U_{0})\to\cdots.
$$
Therefore, $H^{\bullet,\bullet}_{\bar D}(\mathcal{U},U_{0})$ is determined uniquely modulo
canonical isomorphism. Thus we denote it with $H^{\bullet,\bullet}_{\bar D}(X,X\setminus S)$
and we call it the \emph{relative Dolbeault cohomology} of $X$.\\
Together with integration theory the relative Dolbeault cohomology has been used
to study the localization of characteristic classes (cf. \cite{suwa}, 
\cite{abate-bracci-suwa-tovena}) and has found more recent
applications to hyperfunction theory (cf. \cite{honda-izawa-suwa}).\\
However notice that the explicit computation of these relative Dolbeault cohomology groups can be very difficult even in some easy situations.\\

If $X$ and $\tilde X$ are complex manifolds, $S$ and $\tilde S$ are closed subsets
in $X$ and $\tilde X$ respectively and $f:\tilde X\to X$ is a holomorphic map such that $f(\tilde S)\subset S$ and
$f(\tilde X\setminus \tilde S)\subset f(X\setminus S)$, 
then $f$ induces a natural map in relative cohomology.
Indeed, setting $U_0:= X\setminus S$, $\tilde U_0:= \tilde X\setminus \tilde S$
and let $U_1$, $\tilde U_1$ be open neighborhoods of $S$ and $\tilde S$
in $X$ and $\tilde X$ respectively, chosen in such a way that
$f(\tilde U_1)\subset U_1$. Take the open coverings
$\mathcal{U}:=\left\lbrace U_0,U_1\right\rbrace$
and $\mathcal{\tilde U}:=\left\lbrace \tilde U_0,\tilde U_1\right\rbrace$
of $X$ and $\tilde X$ respectively, then we have a homomorphism
$$
f^*:A^{\bullet,\bullet}(\mathcal{U},U_0)\to 
A^{\bullet,\bullet}(\mathcal{\tilde U},\tilde U_0)
$$
defined on every element $(\xi_1,\xi_{01})\in A^{\bullet,\bullet}(\mathcal{U},U_0)$ as
$$
f^*(\xi_1,\xi_{01}):=(f^*\xi_1,f^*\xi_{01})
$$
which induces a homomorphism in relative cohomology
$$
f^*:H^{\bullet,\bullet}_{\bar D}(X,X\setminus S)\to
H^{\bullet,\bullet}_{\bar D}(\tilde X,\tilde X\setminus \tilde S)\,.
$$

\section{Relative \v{C}ech-Dolbeault homology}

In this Section we describe a \v{C}ech interpretation  of the Dolbeault homology and we give a definition for its relative counterpart. Finally in Theorem
\ref{thm:isom-relative-cohom-relative-hom} we prove that the relative Dolbeault cohomology can be computed equivalently using forms and currents.\\

\textbf{\v{C}ech-Dolbeault homology.}
Before defining the relative Dolbeault homology we discuss a \v{C}ech interpretation of the Dolbeault homology.
Let $X$ be a complex manifold of complex dimension $n$ and denote with $\mathcal{K}^{p,q}(X)=\mathcal{K}_{n-p,n-q}(X)$ the space of currents of bidegree $(p,q)$, or equivalently of bidimension $(n-p,n-q)$, on $X$,
namely the topological dual of the space of $(n-p,n-q)$-forms with compact support in $X$.\\
The differential operator $\delbar:\mathcal{K}^{p,q}(X)\to
\mathcal{K}^{p,q+1}(X)$ is defined as usual, for any
$T\in \mathcal{K}^{p,q}(X)$, $\varphi\in A^{n-p,n-q-1}(X)$ with compact support, as
$$
\left\langle\delbar T,\varphi\right\rangle:= (-1)^{p+q+1}
\left\langle T,\delbar \varphi\right\rangle\,,
$$
where $\left\langle -\,,\,- \right\rangle$ stands for the duality pairing.
Then for any $p$, we denote with $H^{p,\bullet}_{\delbar_\mathcal{K}}(X)$ the cohomology
of the complex $(\mathcal{K}^{p,\bullet}(X),\delbar)$ which is called
the \emph{Dolbeault homology of $X$}.\\

Now let $\mathcal{U}=\left\lbrace U_\alpha\right\rbrace_{\alpha\in I}$ be an open covering of $X$ where $I$ is an ordered set and let
$I^{(r)}:=\left\lbrace (\alpha_0,\cdots,\alpha_r)\,\mid\, \alpha_0<
\cdots<\alpha_r,\,\alpha_{\nu}\in I\right\rbrace$. 
We set, for any $r,p,q$,
$$
B^r(\mathcal{U},\mathcal{K}^{p,q})\,:=\,\Pi_{(\alpha_0,\cdots,\alpha_r)
\in I^{(r)}}
\mathcal{K}^{p,q}(U_{\alpha_0\cdots\alpha_r})
$$
where, by definition, $U_{\alpha_0\cdots\alpha_r}:=U_{\alpha_0}\cap\cdots\cap
U_{\alpha_r}$, and we define the boundary operator as
$$
\delta:B^r(\mathcal{U},\mathcal{K}^{p,q})\to
B^{r+1}(\mathcal{U},\mathcal{K}^{p,q})
$$
$$
(\delta T)_{\alpha_0\cdots\alpha_{r+1}}:=\sum_{\nu=0}^{r+1}(-1)^\nu T_{\alpha_0\cdots\hat{\alpha}_\nu\cdots\alpha_{r+1}}
$$
where all the currents $T_{\alpha_0\cdots\hat{\alpha}_\nu\cdots\alpha_{r+1}}$
have to be restricted to $U_{\alpha_0\cdots\alpha_{r+1}}$.
Moreover, setting
$\delbar:B^r(\mathcal{U},\mathcal{K}^{p,q})\to B^r(\mathcal{U},\mathcal{K}^{p,q+1})$ the extension of the operator $\delbar$ to every components one gets that $B^\bullet(\mathcal{U},\mathcal{K}^{p,\bullet})$ endowed with the operators $\delta$ and $\delbar$ is a double complex. The associated total complex
will be denoted with
$(\mathcal{K}^{p,\bullet}(\mathcal{U}),\bar D_{\mathcal{K}})$, namely
$$
\mathcal{K}^{p,q}(\mathcal{U}):=\oplus_{s+r=q}B^r(\mathcal{U},\mathcal{K}^{p,s})
$$
and
$$
\bar D_{\mathcal{K}}(T_{\alpha_0\cdots\alpha_{r}})=
\sum_{\nu=0}^r(-1)^\nu
T_{\alpha_0\cdots\hat{\alpha}_\nu\cdots\alpha_{r}}+
(-1)^r\,\delbar T_{\alpha_0\cdots\alpha_{r}}\,.
$$
In particular, for $r=1$ we have
$$
\bar D_{\mathcal{K}}(T_{\alpha_0\,\alpha_{1}}):=
T_{\alpha_{1}}-T_{\alpha_{0}}-
\delbar \,T_{\alpha_0\alpha_{1}}\,.
$$
\begin{definition}
The cohomology of the complex $(\mathcal{K}^{p,\bullet}(\mathcal{U}),\bar D_{\mathcal{K}})$ will be denoted with 
$H_{\bar D_{\mathcal{K}}}^{\bullet,\bullet}(\mathcal{U})$ and will be called
\emph{\v{C}ech Dolbeault homology} associated to the covering $\mathcal{U}$.
\end{definition}
This definition does not depend on the open covering, indeed one has the following
\begin{theorem}\label{isom-chec-dolbeault}
The restriction map $\mathcal{K}^{p,q}(X)\to B^0(\mathcal{U},\mathcal{K}^{p,q})$ induces a natural isomorphism
$$
H^{p,q}_{\delbar_\mathcal{K}}\to H_{\bar D_{\mathcal{K}}}^{p,q}(\mathcal{U}).
$$
\end{theorem}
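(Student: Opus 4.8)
The plan is to mimic, for currents, the argument that establishes Suwa's isomorphism $H^{\bullet,\bullet}_{\delbar}(X)\cong H^{\bullet,\bullet}_{\bar D}(\mathcal{U})$ for smooth forms recalled above. The map in the statement is the augmentation of the double complex $B^{\bullet}(\mathcal{U},\mathcal{K}^{p,\bullet})$: it sends a current $T\in\mathcal{K}^{p,q}(X)$ to the $0$-cochain $(T|_{U_\alpha})_\alpha\in B^0(\mathcal{U},\mathcal{K}^{p,q})$, which sits in degree $q$ of the total complex $(\mathcal{K}^{p,\bullet}(\mathcal{U}),\bar D_{\mathcal{K}})$ and is a chain map, since $\delta(T|_{U_\alpha})_\alpha=0$ and $\bar D_{\mathcal{K}}$ reduces to $\delbar$ on the $r=0$ part. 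I would then compare the two cohomologies through the spectral sequence of the double complex obtained by taking horizontal $\delta$-cohomology first.

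The crucial point---and the only nontrivial step---is the exactness, for each fixed $p$ and $s$, of the augmented \v{C}ech complex of currents
$$
0\to\mathcal{K}^{p,s}(X)\to B^0(\mathcal{U},\mathcal{K}^{p,s})\overset{\delta}\to B^1(\mathcal{U},\mathcal{K}^{p,s})\overset{\delta}\to\cdots\,.
$$
This expresses the fact that the presheaf $U\mapsto\mathcal{K}^{p,s}(U)$ is a fine sheaf. I would first check the sheaf axioms: since a test form $\varphi$, i.e. a compactly supported $(n-p,n-s)$-form on $X$, has compact support, a partition of unity $\{\rho_\alpha\}$ subordinate to $\mathcal{U}$ lets one write $\varphi=\sum_\alpha\rho_\alpha\varphi$ with each $\rho_\alpha\varphi$ compactly supported in $U_\alpha$; separation and gluing of compatible local currents then follow by setting $\langle T,\varphi\rangle:=\sum_\alpha\langle T_\alpha,\rho_\alpha\varphi\rangle$. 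Fineness is then automatic because $\mathcal{K}^{p,s}$ is a module over the sheaf of smooth functions, and exactness in degrees $r\ge1$ follows from the standard contracting homotopy $h:B^r\to B^{r-1}$ attached to the partition of unity,
$$
(hT)_{\alpha_0\cdots\alpha_{r-1}}:=\sum_{\beta\in I}\rho_\beta\,T_{\beta\alpha_0\cdots\alpha_{r-1}}\,,
$$
(where $T$ is extended to all tuples by the usual alternating convention), which satisfies $\delta h+h\delta=\mathrm{id}$, while exactness at $r=0$ is the sheaf property just established. Here the key subtlety is that, although currents restrict to smaller opens only by restricting test forms, the product $\rho_\beta T_{\beta\alpha_0\cdots}$ does make sense as a current on the larger open set $U_{\alpha_0\cdots\alpha_{r-1}}$: since $\operatorname{supp}\rho_\beta\subset U_\beta$, multiplication by $\rho_\beta$ extends the current by zero. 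This is exactly where fineness enters, and carrying it out cleanly for currents rather than for forms is the main obstacle.

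With exactness in hand, I would run the spectral sequence of the double complex $B^{r,s}=B^r(\mathcal{U},\mathcal{K}^{p,s})$ (with total differential $\bar D_{\mathcal{K}}$) that takes horizontal $\delta$-cohomology first. By the previous step the $E_1$ page is concentrated in the column $r=0$, with $E_1^{0,s}=\mathcal{K}^{p,s}(X)$, and the induced differential $d_1$ is $\delbar$; hence $E_2^{0,s}=H^{p,s}_{\delbar_\mathcal{K}}(X)$ and the spectral sequence degenerates, yielding $H^{p,q}_{\bar D_{\mathcal{K}}}(\mathcal{U})\cong H^{p,q}_{\delbar_\mathcal{K}}(X)$. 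Finally I would check that the edge isomorphism coincides with the augmentation map of the statement---this is immediate from the identification of $E_1^{0,\bullet}$ with global currents---so that the restriction map $\mathcal{K}^{p,q}(X)\to B^0(\mathcal{U},\mathcal{K}^{p,q})$ induces the asserted natural isomorphism, which in particular is independent of the chosen covering $\mathcal{U}$.
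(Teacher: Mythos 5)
Your proposal is correct and follows essentially the same route as the paper: the first spectral sequence of the double complex $B^{\bullet}(\mathcal{U},\mathcal{K}^{p,\bullet})$ taking $\delta$-cohomology first, with the key input being the acyclicity of the \v{C}ech complex of currents, which the paper justifies in one line by noting that $\mathcal{K}^{p,q}$ is a $\mathcal{C}^{\infty}(X)$-module (citing Suwa's argument for forms). The only difference is that you spell out what the paper leaves implicit, namely the partition-of-unity homotopy $\delta h+h\delta=\mathrm{id}$ and the observation that $\rho_\beta T_{\beta\alpha_0\cdots\alpha_{r-1}}$ extends by zero to the larger open set, which is exactly the content of the cited acyclicity.
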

\begin{proof}
The argument works exactly as in \cite{suwa}, since the complex of $(p,q)$-currents on $\mathcal{U}$ is acyclic, being a $\mathcal{C}^{\infty}(X)$-module.
For completeness we recall here the proof. We consider the first spectral sequence associated to the double complex
$B^\bullet(\mathcal{U},\mathcal{K}^{p,\bullet})$. In particular, at the second page one has
$$
'E^{q,r}_2:=H^q_{\delbar}H^r_{\delta}\left(
B^\bullet(\mathcal{U},\mathcal{K}^{p,\bullet})\right)
$$
which converges to $H^{p,q+r}_{\bar D_{\mathcal{K}}}(\mathcal{U})$.
By the acyclic property, for $r>0$, $H^r_{\delta}\left(
B^\bullet(\mathcal{U},\mathcal{K}^{p,\bullet})\right)=0$ and for $r=0$,
$H^0_{\delta}\left(
B^\bullet(\mathcal{U},\mathcal{K}^{p,\bullet})\right)=
\mathcal{K}^{p,\bullet}(X)$. Therefore,
$H_{\bar D_{\mathcal{K}}}^{p,q}(\mathcal{U})\simeq\,
'E^{q,0}_2\simeq H^{p,q}_{\delbar_\mathcal{K}}(X)$.
\end{proof}
\begin{rem}
Since the Dolbeault cohomology can be computed using currents, it follows by the previous Theorem that there is also an isomorphism with the Dolbeault cohomology of $X$, namely
$$
H^{\bullet,\bullet}_{\delbar}(X)\,\simeq \,
H^{\bullet,\bullet}_{\delbar_\mathcal{K}}(X)\,\simeq\,
H_{\bar D_{\mathcal{K}}}^{\bullet,\bullet}(\mathcal{U})\,.
$$
Since to every $(p,q)$-form $\varphi$ on $X$ we can associate a $(p,q)$-current 
$i(\varphi):=\int_X \varphi\wedge\cdot$, then we have a natural injective map
$$
i:A^{p,q}(\mathcal{U})\to \mathcal{K}^{p,q}(\mathcal{U})
$$
where by definition (see \cite{suwa} for more details)
$$
A^{p,q}(\mathcal{U}):=\oplus_{s+r=q}\left(
\Pi_{(\alpha_0,\cdots,\alpha_r)
\in I^{(r)}}
A^{p,s}(U_{\alpha_0\cdots\alpha_r})\right)\,.
$$
\end{rem}

\textbf{Relative \v{C}ech-Dolbeault homology.}
Now we define the relative \v{C}ech Dolbeault homology. Let $S$ be a closed set in $X$. We let $U_{0}=X\setminus S$ and $U_{1}$ be an open neighborhood of $S$ in $X$ and consider the open covering $\mathcal{U}=\{U_{0},U_{1}\}$ of $X$. 
In particular, in this situation we have
$$
\mathcal{K}^{p,q}(\mathcal{U})=B^0(\mathcal{U},\mathcal{K}^{p,q})
\oplus B^1(\mathcal{U},\mathcal{K}^{p,q-1})
$$
and an element of $\mathcal{K}^{p,q}(\mathcal{U})$ can be written as a triple
$(T_0,T_1,T_{01})$ where $T_0$ is a $(p,q)$-current on $U_0$,
$T_1$ is a $(p,q)$-current on $U_1$ and 
$T_{01}$ is a $(p,q-1)$-current on $U_{01}$.\\
We set
\[
\mathcal{K}^{p,q}(\mathcal{U},U_{0}):=
\left\lbrace T\in\mathcal{K}^{p,q}(\mathcal{U})\,\mid\, T_0=0\right\rbrace
=\mathcal{K}^{p,q}(U_{1})\oplus\mathcal{K}^{p,q-1}(U_{01}).
\]
Then $\mathcal{K}^{p,\bullet}(\mathcal{U},U_{0})$ is a subcomplex of $\mathcal{K}^{p,\bullet}(\mathcal{U})$. 
Therefore we have the following short exact sequence
\[
0\to \mathcal{K}^{p,\bullet}(\mathcal{U},U_{0})\to 
\mathcal{K}^{p,\bullet}(\mathcal{U})\to \mathcal{K}^{p,\bullet}(U_{0})\to 0
\]
where the first map is the inclusion and the second map is the projection on the first factor and clearly by definition
$$
\bar D_{\mathcal{K}}:\mathcal{K}^{p,q}(\mathcal{U},U_{0})\to \mathcal{K}^{p,q+1}(\mathcal{U},U_{0})\,.
$$
We denote with $H^{p,q}_{\bar D_{\mathcal{K}}}(\mathcal{U},U_{0})$ the associated cohomology.
We get the following long exact sequence in homology
$$
\cdots\to H^{p,q-1}_{\delbar_{\mathcal{K}}}(U_{0})\overset\delta
\to H^{p,q}_{\bar D_{\mathcal{K}}}(\mathcal{U},U_{0})\overset{j^{*}}\to
 H^{p,q}_{\bar D_{\mathcal{K}}}(\mathcal{U})\overset{i^{*}}\to
  H^{p,q}_{\delbar_{\mathcal{K}}}(U_{0})\to\cdots.
$$
By Theorem \ref{isom-chec-dolbeault} we see that $H^{p,q}_{\delbar_{\mathcal{K}}}(\mathcal{U},U_{0})$ is determined uniquely modulo
canonical isomorphisms. 
\begin{definition}
In the above situation we set
$H^{p,q}_{\bar D_{\mathcal{K}}}(X,X\setminus S):=H^{p,q}_{D_{\mathcal{K}}}(\mathcal{U},X\setminus S)$ and we call it the
\emph{relative \v{C}ech-Dolbeault homology} of $X$ with respect to
$X\setminus S$.
\end{definition}
Notice that we have an injective map
$$
i:A^{p,q}(\mathcal{U},U_0)\to \mathcal{K}^{p,q}(\mathcal{U},U_0)
$$
which induces naturally a map
$$
H^{p,q}_{\bar D}(X,X\setminus S)\to
H^{p,q}_{\bar D_{\mathcal{K}}}(X,X\setminus S)
$$
from the relative Dolbeault cohomology to the relative Dolbeault homology.

\begin{theorem}\label{thm:isom-relative-cohom-relative-hom}
The map
$$
H^{p,q}_{\bar D}(X,X\setminus S)\to
H^{p,q}_{\bar D_{\mathcal{K}}}(X,X\setminus S)
$$
is an isomorphism.
\end{theorem}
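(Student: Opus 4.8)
The plan is to realize the map in question as the central vertical arrow of a commutative ladder between the two long exact sequences displayed above, and then to conclude by the five lemma. The inclusion of forms into currents $i$ respects both the subcomplex $\{\xi_0=0\}$ (resp. $\{T_0=0\}$) and the projection onto the $U_0$-component, so it gives a morphism of the two short exact sequences of complexes
\[
\begin{CD}
0 @>>> A^{p,\bullet}(\mathcal{U},U_0) @>>> A^{p,\bullet}(\mathcal{U}) @>>> A^{p,\bullet}(U_0) @>>> 0\\
@. @VViV @VViV @VViV @.\\
0 @>>> \mathcal{K}^{p,\bullet}(\mathcal{U},U_0) @>>> \mathcal{K}^{p,\bullet}(\mathcal{U}) @>>> \mathcal{K}^{p,\bullet}(U_0) @>>> 0.
\end{CD}
\]
The point to check here is that $i$ is genuinely a morphism of complexes: it commutes with the \v{C}ech coboundary $\delta$ because restriction of a form corresponds to restriction of its associated current, and it commutes with $\delbar$ precisely because of the sign $(-1)^{p+q+1}$ in the definition of $\delbar$ on currents, which is dictated by Stokes' theorem. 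Passing to cohomology then yields the commutative ladder relating the two long exact sequences.

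It remains to identify two of the three families of vertical maps as isomorphisms. First, the map $H^{p,q}_{\delbar}(U_0)\to H^{p,q}_{\delbar_{\mathcal{K}}}(U_0)$ is an isomorphism: this is the classical fact that the Dolbeault cohomology of the complex manifold $U_0$ may be computed equally well with smooth forms or with currents, since both resolve the sheaf of holomorphic $p$-forms. Second, the map $H^{p,q}_{\bar D}(\mathcal{U})\to H^{p,q}_{\bar D_{\mathcal{K}}}(\mathcal{U})$ is an isomorphism: by Suwa's theorem recalled in the Preliminaries the source is canonically isomorphic to $H^{p,q}_{\delbar}(X)$, while by Theorem \ref{isom-chec-dolbeault} together with the Remark the target is canonically isomorphic to $H^{p,q}_{\delbar_{\mathcal{K}}}(X)\simeq H^{p,q}_{\delbar}(X)$; since both identifications are induced by restriction to the covering $\mathcal{U}$ and $i$ commutes with restriction, the square
\[
\begin{CD}
H^{p,q}_{\delbar}(X) @>\sim>> H^{p,q}_{\bar D}(\mathcal{U})\\
@V{\wr}VV @VViV\\
H^{p,q}_{\delbar_{\mathcal{K}}}(X) @>\sim>> H^{p,q}_{\bar D_{\mathcal{K}}}(\mathcal{U})
\end{CD}
\]
commutes, and hence the right vertical arrow is an isomorphism.

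With these two families of isomorphisms in hand, I would apply the five lemma to the five consecutive terms $H^{p,q-1}_{\bar D}(\mathcal{U})\to H^{p,q-1}_{\delbar}(U_0)\to H^{p,q}_{\bar D}(\mathcal{U},U_0)\to H^{p,q}_{\bar D}(\mathcal{U})\to H^{p,q}_{\delbar}(U_0)$ and their current analogues: the four outer vertical maps are isomorphisms (two of type $\mathcal{U}$, two of type $U_0$), so the central one $H^{p,q}_{\bar D}(X,X\setminus S)\to H^{p,q}_{\bar D_{\mathcal{K}}}(X,X\setminus S)$ is an isomorphism as well. I expect the only genuinely nonformal ingredient to be the classical identification of Dolbeault cohomology via currents invoked above; everything else is diagram chasing, the main care being to verify the commutativity of the two squares, that is, the compatibility of $i$ with the \v{C}ech differential and with the canonical isomorphisms to $H^{p,q}_{\delbar}(X)$.
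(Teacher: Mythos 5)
Your proposal is correct and follows essentially the same route as the paper: the inclusion of forms into currents gives a morphism of the two short exact sequences of complexes, hence a commutative ladder of long exact sequences, and the five lemma applied with the outer isomorphisms (Dolbeault cohomology of $X$ and of $U_0=X\setminus S$ computed equivalently by forms or currents, via Theorem \ref{isom-chec-dolbeault} and Suwa's isomorphism) yields the result. Your additional verifications --- that $i$ commutes with $\delta$ and $\delbar$ (the sign check via Stokes) and that the identifications of $H^{p,q}_{\bar D}(\mathcal{U})$ and $H^{p,q}_{\bar D_{\mathcal{K}}}(\mathcal{U})$ with the Dolbeault cohomology of $X$ are compatible with $i$ --- are details the paper leaves implicit, but they do not change the argument.
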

\begin{proof}
We have the following commutative diagram between forms and currents
$$
\xymatrix{
0\ar[r] &A^{p,q}(\mathcal{U},U_{0})\ar[r] \ar[d]&
A^{p,q}(\mathcal{U})\ar[r] \ar[d]& 
A^{p,q}(U_{0})\ar[r]\ar[d] & 0\\
0\ar[r] &\mathcal{K}^{p,q}(\mathcal{U},U_{0})\ar[r] &
\mathcal{K}^{p,q}(\mathcal{U})\ar[r] & 
\mathcal{K}^{p,q}(U_{0})\ar[r] & 0
}
$$
which induces the commutative diagram with exact rows
$$
{\resizebox{\textwidth}{!}{
\xymatrix{
\cdots \ar[r] & H^{p,q-1}_{\delbar}(X) \ar[r] \ar[d]^{\simeq} & 
H^{p,q-1}_{\delbar}(X\setminus S) \ar[r] \ar[d]^{\simeq} & 
H^{p,q}_{\bar D}(X,X\setminus S) \ar[r] \ar[d] & 
H^{p,q}_{\delbar}(X) \ar[r] \ar[d]^{\simeq} &
H^{p,q}_{\delbar}(X\setminus S) \ar[r] \ar[d]^{\simeq} & \cdots \\
\cdots \ar[r] & H^{p,q-1}_{\delbar_\mathcal{K}}(X) \ar[r] & 
H^{p,q-1}_{\delbar_\mathcal{K}}(X\setminus S) \ar[r] & 
H^{p,q}_{\bar D_{\mathcal{K}}}(X,X\setminus S) \ar[r] & 
H^{p,q}_{\delbar_\mathcal{K}}(X) \ar[r] &
H^{p,q}_{\delbar_\mathcal{K}}(X\setminus S)\ar[r] & \cdots 
}
}}
$$
Since the Dolbeault cohomology of $X$ and $X\setminus S$ can be computed equivalently using differential forms or currents we have that the first two and the last two vertical maps are isomorphisms. By the five lemma we obtain the same conclusion for the central vertical morphism.
\end{proof}

Now let $\pi:\tilde X\longrightarrow X$ be a proper holomorphic map between two complex manifolds and let $S$ be a closed subset of $X$ and $\tilde S$ a
closed subset of $\tilde X$ and take $U_0=X\setminus S$ and $U_1$ a neighborhood of $S$ in $X$, similarly $\tilde U_0=
\tilde X\setminus \tilde S$ and $\tilde U_1$ a neighborhood of $\tilde 
S$ in $\tilde X$. Suppose that $\pi (\tilde S)\subset S$,
$\pi (\tilde U_0)\subset U_0$ and
$\pi (\tilde U_1)\subset U_1$. Then $\mathcal{U}:=
\left\lbrace U_0,U_1\right\rbrace$ and $\mathcal{\tilde U}:=
\left\lbrace \tilde U_0,\tilde U_1\right\rbrace$ are compatible open coverings of $X$
and $\tilde X$ respectively. We define the \emph{push-forward} 
$\pi_*:\mathcal{K}^{p,q}(\mathcal{\tilde U},\tilde U_0)\to
\mathcal{K}^{p,q}(\mathcal{U}, U_0)$
namely
$$
\pi_*:\mathcal{K}^{p,q}(\tilde U_1)\oplus
\mathcal{K}^{p,q-1}(\tilde U_{01})\to
\mathcal{K}^{p,q}(U_1)\oplus
\mathcal{K}^{p,q-1}(U_{01})
$$
as
$$
(T_1,T_{01})\mapsto (\pi_*T_1,\pi_*T_{01})\,.
$$
In particular, the push-forward commutes with the differential
$\bar D_{\mathcal{K}}$, indeed
$$
\pi_*\bar D_{\mathcal{K}}(T_1,T_{01})=
(\pi_*\delbar T_1,\pi_*T_1-\pi_*\delbar T_{01})=
(\delbar\pi_* T_1,\pi_*T_1-\delbar \pi_*T_{01})=
\bar D_{\mathcal{K}}\pi_*(T_1,T_{01})\,,
$$
where in the second equality we use that $\pi_*$ commutes with 
the operator $\delbar$.
Hence, the push-forward induces a map in relative Dolbeault homology
$$
\pi_*:H^{\bullet,\bullet}_{\bar D_{\mathcal{K}}}
(\tilde X,\tilde X\setminus \tilde S)\to
H^{\bullet,\bullet}_{\bar D_{\mathcal{K}}}(X, X\setminus  S)\,.
$$
We will use this map in the next Section in order to prove that under suitable assumptions the pull-back map in relative cohomology is injective.

\begin{rem}
Similar considerations can be done for the relative \v{C}ech-de Rham cohomology (cf. \cite{suwa-book} for its definition) using $k$-currents with compact support and the exterior derivative $d$.
\end{rem}

\section{Comparisons via proper holomorphic surjective maps}

In this Section we study a Wells-type result for relative Dolbeault cohomology,
in particular we prove the following
\begin{theorem}\label{thm:injectivity}
Let $\pi:\tilde X\longrightarrow X$ be a proper, surjective, holomorphic map between two complex manifolds of the same dimension. Suppose that $X$ is connected and let $S$ and $\tilde S$ be closed complex submanifolds of $X$ and $\tilde X$ respectively, such that $\pi(\tilde S)\subset S$ and
$\pi(\tilde X\setminus\tilde S)\subset X\setminus S$.\\
Then,
$$
\pi^*:H^{p,q}_{\bar D}(X,X\setminus S)\longrightarrow 
H^{p,q}_{\bar D}(\tilde X,\tilde X\setminus \tilde S)
$$
is injective for any $p,q$.\\
\end{theorem}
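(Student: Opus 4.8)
The plan is to exploit the equivalence between relative Dolbeault cohomology and relative \v{C}ech-Dolbeault homology established in Theorem \ref{thm:isom-relative-cohom-relative-hom}, together with the push-forward map $\pi_*$ on homology constructed at the end of the previous section. The key idea is the classical projection-formula argument of Wells: to show that $\pi^*$ is injective, I would produce a left inverse, i.e. a map that recovers a class from its pullback. Since $\pi$ is a proper surjective holomorphic map between manifolds of the same dimension, there is an integer $d$ (the generic number of sheets, using connectedness of $X$) such that $\pi_* \pi^* = d \cdot \mathrm{id}$ on currents, and since $d \neq 0$ this composition is an isomorphism, forcing $\pi^*$ to be injective.

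First I would transport the problem to the homological side. Via the isomorphism of Theorem \ref{thm:isom-relative-cohom-relative-hom}, both $H^{p,q}_{\bar D}(X, X\setminus S)$ and $H^{p,q}_{\bar D}(\tilde X, \tilde X\setminus \tilde S)$ are identified with the corresponding relative \v{C}ech-Dolbeault homology groups, and I would check that under these identifications the cohomological pullback $\pi^*$ corresponds to the natural pullback of forms viewed as currents, so that the diagram relating $\pi^*$ on cohomology and the current-level maps commutes. Then the composition $\pi_* \circ \pi^*$ makes sense as an endomorphism of $H^{p,q}_{\bar D_{\mathcal{K}}}(X, X\setminus S)$, computed componentwise on the pairs $(\xi_1, \xi_{01})$ representing relative classes.

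The core computation is the \emph{projection formula}. For a smooth $(p,q)$-form $\xi$ on $X$, one has $\pi_*(\pi^*\xi) = d\,\xi$ as currents, where $d$ is the degree of $\pi$; this is the statement that integrating the pullback of a test form over the fibers reproduces $d$ times the form, and it holds because $\pi$ is generically a $d$-sheeted covering away from a proper analytic subset, which carries no mass for currents of the relevant dimension. I would verify this identity separately on the two components $T_1$ and $T_{01}$ of a relative homology class, using that the hypotheses $\pi(\tilde S)\subset S$ and $\pi(\tilde X \setminus \tilde S)\subset X\setminus S$ guarantee $\pi(\tilde U_1)\subset U_1$ and $\pi(\tilde U_{01})\subset U_{01}$, so that $\pi_*$ and $\pi^*$ are compatible with the covering structure and the subcomplex condition $T_0 = 0$ is preserved. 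Since $\pi_*$ commutes with $\bar D_{\mathcal{K}}$ (as shown in the previous section), the identity descends to homology, giving $\pi_* \pi^* = d \cdot \mathrm{id}$.

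The main obstacle I anticipate is making the projection formula rigorous at the level of \emph{currents} on the overlaps, particularly for the component $\xi_{01}$ on $U_{01}$, and confirming that $\pi_*$ is well-defined on the relative complex. Properness of $\pi$ ensures that push-forward of currents is defined, but one must be careful that the degree $d$ is constant — here connectedness of $X$ is essential — and that the exceptional locus where $\pi$ fails to be a covering is sufficiently thin (a proper analytic subvariety) to be negligible for the current-theoretic pairing. Once $\pi_*\pi^* = d\,\mathrm{id}$ with $d \neq 0$ is established, injectivity of $\pi^*$ is immediate, and transporting back through Theorem \ref{thm:isom-relative-cohom-relative-hom} yields injectivity of the original map on relative Dolbeault cohomology.
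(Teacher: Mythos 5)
Your proposal is correct and follows essentially the same route as the paper: transport the problem to relative \v{C}ech-Dolbeault homology via Theorem \ref{thm:isom-relative-cohom-relative-hom}, use the push-forward $\pi_*$ together with Wells' projection formula $\mu\, i = \pi_*\,\tilde i\,\pi^*$ (with the degree $\mu$ constant by connectedness of $X$ and nonzero by surjectivity), and conclude that $\pi^*$ admits a left inverse up to the factor $\mu$. The only cosmetic difference is that the paper cites \cite[Lemma 2.1]{wells} for the componentwise projection formula on $U_1$ and $U_{01}$ rather than reproving it via the generic-covering argument you sketch.
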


\begin{proof}
We denote with $U_0=X\setminus S$ and $\tilde U_0=
\tilde X\setminus \tilde S$. Let
$U_1$ be a neighborhood of $S$ in $X$ and $\tilde U_1$ be a neighborhood of $\tilde 
S$ in $\tilde X$ such that $\pi(\tilde U_1)= U_1$ and
$\pi(\tilde U_{01})=U_{01}$. Hence we
consider the open coverings $\mathcal{U}:=
\left\lbrace U_0,U_1\right\rbrace$ of $X$ and $\mathcal{\tilde U}:=
\left\lbrace \tilde U_0,\tilde U_1\right\rbrace$ of $\tilde X$.\\
We take the following diagram, for any $p,q$
$$
\xymatrixcolsep{5pc}\xymatrix{
A^{p,q}(\mathcal{\tilde U},\tilde U_0) \ar[r]^{\tilde i} &
\mathcal{K}^{p,q}(\mathcal{\tilde U},\tilde U_0)\ar[d]^{\pi_*}\\
A^{p,q}(\mathcal{U}, U_0) \ar[u]^{\pi^*}\ar[r]^{i}   &
\mathcal{K}^{p,q}(\mathcal{U}, U_0)\,,
}
$$
or equivalently, by definition
$$
\xymatrixcolsep{5pc}\xymatrix{
A^{p,q}(\tilde U_{1})\oplus A^{p,q-1}(\tilde U_{01})
 \ar[r]^{\tilde i} &
\mathcal{K}^{p,q}(\tilde U_{1})\oplus \mathcal{K}^{p,q-1}(\tilde U_{01})\ar[d]^{\pi_*}\\
A^{p,q}(U_{1})\oplus A^{p,q-1}(U_{01}) \ar[u]^{\pi^*}\ar[r]^{i}   &
\mathcal{K}^{p,q}(U_{1})\oplus \mathcal{K}^{p,q-1}(U_{01})\,,
}
$$
where $\tilde i$ and $i$ denote the natural injections of forms into currents.
By \cite[Lemma 2.1]{wells} we have that the diagram
$$
\xymatrixcolsep{5pc}\xymatrix{
A^{p,q}(\tilde U_{1})
 \ar[r]^{\tilde i} &
\mathcal{K}^{p,q}(\tilde U_{1})\ar[d]^{\pi_*}\\
A^{p,q}(U_{1}) \ar[u]^{\pi^*}\ar[r]^{i}   &
\mathcal{K}^{p,q}(U_{1})\,.
}
$$
commutes up to a constant, more precisely we have
$\mu\, i=\pi_*\tilde i\pi^*$, where $\mu$ is the degree of $\pi$. Recall that the degree of $\pi$ is defined as $\pi_*(1)$, where $1$ is thought as a current on $\tilde X$, in particular it is a $d$-closed function on $X$. By the connectedness of $X$ we have that the degree of $\pi$ is constant on $X$.
Similarly, we also have the commutativity up to $\mu$, of
$$
\xymatrixcolsep{5pc}\xymatrix{
A^{p,q-1}(\tilde U_{01})
 \ar[r]^{\tilde i} &
\mathcal{K}^{p,q-1}(\tilde U_{01})\ar[d]^{\pi_*}\\
 A^{p,q-1}(U_{01}) \ar[u]^{\pi^*}\ar[r]^{i}   &
 \mathcal{K}^{p,q-1}(U_{01})\,.
}
$$
Therefore, one can pass to (co)homology considering the following
$$
\xymatrixcolsep{5pc}\xymatrix{
H^{p,q}_{\bar D}(\tilde X,\tilde X\setminus\tilde S)
 \ar[r]^{\tilde i_*} &
H^{p,q}_{\bar D_{\mathcal{K}}}
(\tilde X,\tilde X\setminus\tilde S)\ar[d]^{\pi_*}\\
 H^{p,q}_{\bar D}(X, X\setminus S)\ar[u]^{\pi^*}\ar[r]^{i_*}   &
 H^{p,q}_{\bar D_{\mathcal{K}}}(X, X\setminus S)\,.
}
$$
We have shown in Theorem \ref{thm:isom-relative-cohom-relative-hom}
that the maps $\tilde i_*$ and $i_*$ in this last diagram are isomorphisms.\\
Using this fact we prove that $\pi^*$ is injective, indeed
let $a\in H^{p,q}(X, X\setminus S)$ and
suppose that $\pi^*a=0$, then $\mu\, i_*a=\pi_*\tilde i_*\pi^*a=0$. Then $i_*a=0$ and by injectivity we can conclude that $a=0$, proving the assertion.
\end{proof}

As already noticed, one can define the relative \v{C}ech-de Rham homology and with similar techniques one can prove an injectivity result also for the relative \v{C}ech-de Rham cohomology (cf. \cite[Theorem 3.1]{wells}). For completeness we write down the Theorem explicitly without the proof since it is the same for the Dolbeault case.
\begin{theorem}\label{thm:injectivity-derham}
Let $\pi:\tilde X\longrightarrow X$ be a proper, surjective, holomorphic map between two complex manifolds of the same dimension. Suppose that $X$ is connected and let $S$ and $\tilde S$ be closed complex submanifolds of $X$ and $\tilde X$ respectively, such that $\pi(\tilde S)\subset S$ and
$\pi(\tilde X\setminus\tilde S)\subset X\setminus S$.\\
Then,
$$
\pi^*:H^{k}_{dR}(X,X\setminus S)\longrightarrow 
H^{k}_{dR}(\tilde X,\tilde X\setminus \tilde S)
$$
is injective for any $k$.\\
\end{theorem}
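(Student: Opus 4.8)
The plan is to run the argument of Theorem \ref{thm:injectivity} line by line, replacing the Dolbeault complex of $(p,q)$-currents by the complex of $k$-currents with the exterior derivative $d$ in place of $\delbar$. First I would set up the \v{C}ech--de Rham counterpart of Section 2. For the covering $\mathcal{U}=\{U_0,U_1\}$ with $U_0=X\setminus S$ and $U_1$ a neighborhood of $S$, one forms on the one hand the \v{C}ech--de Rham complex of forms $(A^\bullet(\mathcal{U}),D)$, with $D$ built from the \v{C}ech coboundary $\delta$ and $d$, and its relative subcomplex $A^\bullet(\mathcal{U},U_0)=\{\xi\mid\xi_0=0\}$, whose cohomology is the relative de Rham cohomology $H^k_{dR}(X,X\setminus S)$; and on the other hand the total complex of $k$-currents $(\mathcal{K}^k(\mathcal{U}),D_{\mathcal{K}})$ with its relative subcomplex $\mathcal{K}^k(\mathcal{U},U_0)=\{T\mid T_0=0\}$, whose cohomology is the \emph{relative \v{C}ech--de Rham homology}, which I denote $H^k_{D_{\mathcal{K}}}(X,X\setminus S)$. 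The same acyclicity argument as in Theorem \ref{isom-chec-dolbeault} applies, the sheaf of currents being fine, so that $H^k_{D_{\mathcal{K}}}$ is independent of the covering and recovers the de Rham homology of $X$.

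Next I would establish the de Rham analogue of Theorem \ref{thm:isom-relative-cohom-relative-hom}, namely that the natural inclusion of forms into currents induces an isomorphism
$$
H^k_{dR}(X,X\setminus S)\overset{\sim}{\longrightarrow}H^k_{D_{\mathcal{K}}}(X,X\setminus S)\,.
$$
Exactly as before, this inclusion is a morphism between the short exact sequence $0\to A^\bullet(\mathcal{U},U_0)\to A^\bullet(\mathcal{U})\to A^\bullet(U_0)\to 0$ and its current counterpart, hence it gives a ladder between the two associated long exact sequences. On the four absolute terms $H^\bullet_{dR}(X)$ and $H^\bullet_{dR}(X\setminus S)$ the vertical maps are isomorphisms, since on any manifold currents form a fine resolution of the constant sheaf and therefore compute ordinary de Rham cohomology; the five lemma then forces the central vertical map to be an isomorphism as well.

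The injectivity itself is then the same formal diagram chase. I would choose the neighborhoods so that $\pi(\tilde U_1)=U_1$ and $\pi(\tilde U_{01})=U_{01}$, and define the push-forward $\pi_*$ componentwise on the relative current complex; it commutes with $D_{\mathcal{K}}$ because $\pi_*$ commutes with $d$. By \cite[Lemma 2.1]{wells}, in the de Rham formulation of \cite[Theorem 3.1]{wells}, the square relating the pull-back $\pi^*$ of forms, the inclusions $i,\tilde i$ of forms into currents, and the push-forward $\pi_*$ commutes up to the degree $\mu=\pi_*(1)$ of $\pi$, which is a positive constant since $X$ is connected and $\pi$ is surjective; that is, $\mu\, i=\pi_*\,\tilde i\,\pi^*$ on each component, hence on the relative complexes. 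Passing to (co)homology and writing $i_*,\tilde i_*$ for the isomorphisms of the previous paragraph, if $a\in H^k_{dR}(X,X\setminus S)$ satisfies $\pi^*a=0$ then $\mu\, i_*a=\pi_*\tilde i_*\pi^*a=0$, whence $i_*a=0$ and so $a=0$. I expect the only step demanding genuine care, rather than a verbatim transcription of the Dolbeault proof, to be the relative form-to-current comparison of the second paragraph: one must check that currents correctly compute the de Rham cohomology of the possibly noncompact open set $X\setminus S$, so that all four outer vertical maps in the ladder really are isomorphisms before the five lemma is invoked.
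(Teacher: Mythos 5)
Your proposal is correct and is essentially the paper's intended argument: the paper omits the proof of Theorem \ref{thm:injectivity-derham} precisely because it is the Dolbeault proof of Theorem \ref{thm:injectivity} transcribed verbatim, with $k$-currents and $d$ in place of $(p,q)$-currents and $\delbar$, the relative form-to-current isomorphism via the five lemma, and Wells' relation $\mu\, i=\pi_*\,\tilde i\,\pi^*$ --- exactly the steps you carry out. Your closing caution about the noncompact open set $X\setminus S$ is handled just as you say, since currents form a fine resolution of the constant sheaf on any manifold.
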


\begin{rem}
Notice that even if $\tilde X$ and $X$ are compact the relative cohomology groups can be infinite-dimensional, therefore in general we don't have a comparison between the dimensions of these cohomology groups.
\end{rem}

\section{Application to blow-ups}\label{rmk:blow-up}

In these last years an increased interest for blow-up formulas for cohomology groups of complex manifolds arised with the purpose of studying the behavior of the $\partial\overline\partial$-lemma under modifications.
Let $\tau:\tilde X\to X$ be the blow-up of a compact complex manifold
$X$ along a closed complex submanifold $Z$. Then, by a classical result if $X$ is K\"ahler the same holds for $\tilde X$ and one can express the de Rham cohomology of $\tilde X$ in terms of the de Rham cohomology groups of $X$ and $Z$ (cf. \cite{voisin}). In fact, the K\"ahler hypothesis can be dropped and several formulas for the de Rham, Dolbeault and Bott-Chern cohomologies can be found in 
\cite{stelzig-blowup}, \cite{yang-yang}, \cite{rao-yang-yang-1}, \cite{rao-yang-yang-1}, \cite{meng}, \cite{angella-suwa-tardini-tomassini}. Here we discuss a blow-up formula for the Dolbeault cohomology in terms of the relative cohomology groups which follows directly by the previous results and which finds further applications in \cite{angella-suwa-tardini-tomassini}.

Let $X$ be a compact complex manifold and let $Z$ be a closed complex submanifold. Then, the blow-up $\tau:\tilde X
\to X$ of $X$ along $Z$ satisfies the previous assumptions with $S=Z$ and
$\tilde S=E:=\pi^{-1}(Z)$ the exceptional divisor.
Then, by Theorem \ref{thm:injectivity} the induced maps in relative cohomology
$$
\tau^*:H^{p,q}_{\bar D}(X,X\setminus Z)\longrightarrow 
H^{p,q}_{\bar D}(\tilde X,\tilde X\setminus E)
$$
are injective for any $p,\,q$.\\
Therefore, one has the following commutative diagram with exact rows
(cf. \cite{angella-suwa-tardini-tomassini})
$$
{\resizebox{\textwidth}{!}{
\xymatrix{
\cdots\ar[r] & H^{p,q-1}_{\delbar}(X\setminus Z)\ar[r]\ar[d]^-{\tau^{*}}& 
H^{p,q}_{\bar D}(X,X\setminus Z)\ar[d]^-{\tau^{*}}\ar[r]&
H^{p,q}_{\delbar}(X)\ar[r]\ar[d]^-{\tau^{*}}&
H^{p,q}_{\delbar}(X\setminus Z)\ar[r]\ar[d]^-{\tau^{*}}&
H^{p,q+1}_{\bar D}(X,X\setminus Z)\ar[r]\ar[d]^-{\tau^{*}}&
\cdots\\
 \cdots\ar[r] & H^{p,q-1}_{\delbar}(\tilde X\setminus E)\ar[r] &
  H^{p,q}_{\bar D}(\tilde X,\tilde X\setminus E)\ar[r]&
  H^{p,q}_{\delbar}(\tilde X)\ar[r] &
  H^{p,q}_{\delbar}(\tilde X\setminus E)\ar[r]&
  H^{p,q+1}_{\bar D}(\tilde X,\tilde X\setminus E)\ar[r]&
  \cdots
 }
 }}
$$
where the maps
$$
\tau^*:H^{\bullet,\bullet}_{\delbar}(X\setminus Z)
\to H^{\bullet,\bullet}_{\delbar}(\tilde X\setminus E)
$$
are isomorphisms since $X$ and $\tilde X$ are biolomorphic outside the exceptional divisor. Furthermore, the maps
$$
\tau^*:H^{\bullet,\bullet}_{\delbar}(X)
\to H^{\bullet,\bullet}_{\delbar}(\tilde X)
$$
are injective by Wells' Theorem \cite[Theorem 3.1]{wells} and finally the maps
$$
\tau^*:H^{\bullet,\bullet}_{\bar D}(X,X\setminus Z)
\to H^{\bullet,\bullet}_{\bar D}(\tilde X,\tilde X\setminus E)
$$
are injective by Theorem \ref{thm:injectivity}. 
As a consequence, for instance by \cite[Lemme II.6]{blanchard}, one obtains the following isomorphisms
\begin{corollary}\label{cor:blow-up-formula}
Let $\tau:\tilde X\to X$ be the blow-up of a compact complex manifold
$X$ along a closed complex submanifold $Z$ and let $E$ be the exceptional divisor. Then, there are isomorphisms
$$
H^{p,q}_{\delbar}(\tilde X)\simeq
\tau^*H^{p,q}_{\delbar}(X)\oplus
\frac{H^{p,q}_{\bar D}(\tilde X,\tilde X\setminus E)}
{\tau^*H^{p,q}_{\bar D}(X,X\setminus Z)}\,,
$$
for any $p,\,q$.
\end{corollary}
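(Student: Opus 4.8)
The plan is to read off the decomposition purely from the morphism of long exact sequences displayed just above the statement, working throughout with the fact that all the groups involved are vector spaces, so that every short exact sequence splits and it is enough to identify one quotient. To keep the bookkeeping manageable I would abbreviate the relevant stretch of the two rows by setting $A=H^{p,q-1}_{\delbar}(X\setminus Z)$, $B=H^{p,q}_{\bar D}(X,X\setminus Z)$, $C=H^{p,q}_{\delbar}(X)$, $D=H^{p,q}_{\delbar}(X\setminus Z)$, $F=H^{p,q+1}_{\bar D}(X,X\setminus Z)$, with primed symbols for the corresponding groups over $\tilde X$, and writing the structural maps of the top row as $A\xrightarrow{\delta}B\xrightarrow{j}C\xrightarrow{r}D\xrightarrow{\delta}F$ (and $j',r',\delta'$ for the bottom row). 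The vertical maps are all $\tau^*$, and the input from the preceding results is exactly the alternation: $\tau^*$ is an \emph{isomorphism} on $A$ and $D$ (since $\tau$ is a biholomorphism off the centre), and \emph{injective} on $B$, $C$ and $F$.

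First I would show that $C'=\tau^*C+j'(B')$. Given $c'\in C'$, I push it into $D'$ via $r'$ and use surjectivity of $\tau^*\colon D\to D'$ to write $r'(c')=\tau^*d$ for some $d\in D$. Applying the connecting map and using that $\delta'(r'(c'))=0$ by exactness of the bottom row, commutativity gives $\tau^*\delta(d)=0$; injectivity of $\tau^*$ on $F$ then forces $\delta(d)=0$, so by exactness of the top row $d=r(c)$ for some $c\in C$. Consequently $r'(c'-\tau^*c)=r'(c')-\tau^* r(c)=r'(c')-\tau^*d=0$, hence $c'-\tau^*c\in\ker r'=\operatorname{im}j'$, which yields the claimed sum.

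Next I would analyse the composite $\Phi\colon B'\xrightarrow{j'}C'\twoheadrightarrow C'/\tau^*C$, which is surjective by the previous step. The inclusion $\tau^*B\subseteq\ker\Phi$ is immediate from $j'\tau^*=\tau^*j$. For the reverse, if $j'(b')=\tau^*c$ then applying $r'$ and using injectivity of $\tau^*$ on $D$ gives $r(c)=0$, so $c=j(b)$ by exactness of the top row; then $j'(b'-\tau^*b)=0$, i.e.\ $b'-\tau^*b\in\ker j'=\operatorname{im}\delta'=\tau^*(\operatorname{im}\delta)\subseteq\tau^*B$, where I use that $\tau^*\colon A\to A'$ is onto. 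Thus $\ker\Phi=\tau^*B$, and $\Phi$ induces an isomorphism $B'/\tau^*B\xrightarrow{\sim}C'/\tau^*C$. Since these are vector spaces the inclusion $\tau^*C\hookrightarrow C'$ splits, and combining the two facts gives
\[
H^{p,q}_{\delbar}(\tilde X)\;\simeq\;\tau^*H^{p,q}_{\delbar}(X)\;\oplus\;\frac{H^{p,q}_{\bar D}(\tilde X,\tilde X\setminus E)}{\tau^*H^{p,q}_{\bar D}(X,X\setminus Z)}.
\]
This is precisely the conclusion packaged by Blanchard's Lemme~II.6 applied to the morphism of exact sequences, so in the write-up one may either cite that lemma or record the two short chases above.

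I do not expect any genuine difficulty here: the analytic substance has already been spent in establishing the isomorphisms on $H^{\bullet,\bullet}_{\delbar}(X\setminus Z)\to H^{\bullet,\bullet}_{\delbar}(\tilde X\setminus E)$ and the injectivity statements (Wells' theorem and Theorem~\ref{thm:injectivity}). The only real point of care is the bookkeeping—ensuring that each appeal to ``isomorphism'' versus ``injective'' is attached to the correct term, namely iso on the complement groups $A,D$ and injectivity on the relative groups $B,F$—since it is exactly this alternation that makes the diagram chase close.
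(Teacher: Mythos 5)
Your proposal is correct and takes essentially the same approach as the paper: the paper sets up the identical morphism of long exact sequences with the same inputs (isomorphisms on the cohomology of the complements, injectivity on $H^{\bullet,\bullet}_{\delbar}$ by Wells and on the relative groups by Theorem \ref{thm:injectivity}) and then simply cites \cite[Lemme II.6]{blanchard}, whereas you write out the diagram chase that this lemma packages. Since you note this equivalence yourself, the two arguments coincide in substance.
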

Notice that in \cite{rao-yang-yang-1} and \cite{rao-yang-yang-2} the authors prove
a Dolbeault blow-up formula using a sheaf-theoretic approach. However their definition of relative Dolbeault cohomology is different from Suwa's one.

\begin{rem}
In fact the previous formula is a bit more general.
Let $\pi:\tilde X\longrightarrow X$ be a proper, surjective, holomorphic map between two complex manifolds of the same dimension. Suppose that $X$ is connected and let $S$ and $\tilde S$ be closed complex submanifolds of $X$ and $\tilde X$ respectively, such that $\pi(\tilde S)\subset S$ and
$\pi(\tilde X\setminus\tilde S)\subset X\setminus S$.
Fix $p$ and $q$ and consider the following diagram with exact rows
$$
{\resizebox{\textwidth}{!}{
\xymatrix{
\cdots\ar[r] & H^{p,q-1}_{\delbar}(X\setminus S)\ar[r]\ar[d]^-{\pi^{*}}& 
H^{p,q}_{\bar D}(X,X\setminus S)\ar[d]^-{\pi^{*}}\ar[r]&
H^{p,q}_{\delbar}(X)\ar[r]\ar[d]^-{\pi^{*}}&
H^{p,q}_{\delbar}(X\setminus S)\ar[r]\ar[d]^-{\pi^{*}}&
H^{p,q+1}_{\bar D}(X,X\setminus S)\ar[r]\ar[d]^-{\pi^{*}}&
\cdots\\
 \cdots\ar[r] & H^{p,q-1}_{\delbar}(\tilde X\setminus \tilde S)\ar[r] &
  H^{p,q}_{\bar D}(\tilde X,\tilde X\setminus \tilde S)\ar[r]&
  H^{p,q}_{\delbar}(\tilde X)\ar[r] &
  H^{p,q}_{\delbar}(\tilde X\setminus \tilde S)\ar[r]&
  H^{p,q+1}_{\bar D}(\tilde X,\tilde X\setminus \tilde S)\ar[r]&
  \cdots
 }
 }}
$$
Then, as above the maps
$$
\tau^*:H^{\bullet,\bullet}_{\delbar}(X)
\to H^{\bullet,\bullet}_{\delbar}(\tilde X)
\quad\text{and}\quad
\tau^*:H^{\bullet,\bullet}_{\bar D}(X,X\setminus S)
\to H^{\bullet,\bullet}_{\bar D}(\tilde X,\tilde X\setminus \tilde S)
$$
are injective respectively by \cite[Theorem 3.1]{wells} and 
Theorem \ref{thm:injectivity}. 
If
$$
\pi^*:H^{p,q-1}_{\delbar}(X\setminus S)\to
H^{p,q-1}_{\delbar}(\tilde X\setminus \tilde S)
$$
is surjective and
$$
\pi^*:H^{p,q}_{\delbar}(X\setminus S)\to
H^{p,q}_{\delbar}(\tilde X\setminus \tilde S)
$$
is an isomorphism, then there is an isomorphism
$$
H^{p,q}_{\delbar}(\tilde X)\simeq
\pi^*H^{p,q}_{\delbar}(X)\oplus
\frac{H^{p,q}_{\bar D}(\tilde X,\tilde X\setminus \tilde S)}
{\pi^*H^{p,q}_{\bar D}(X,X\setminus S)}\,.
$$
Namely the relative cohomology groups measure the gap between
$H^{p,q}_{\delbar}(\tilde X)$ and $H^{p,q}_{\delbar}(X)$.
\end{rem}

\end{document}